\theoremstyle{definition}
 \newtheorem{theo}{Theorem}[section]
 \newtheorem{propo}[theo]{Proposition}
 \newtheorem{lem}[theo]{Lemma}
 \newtheorem{obs}[theo]{Observation}
 \newtheorem{ejem}[theo]{Example}
 \newtheorem{defin}[theo]{Definition}
\def\L{\Lambda}
\def\G{\Gamma}
\def\oo{\mathfrak{o}}
\def\C{\mathscr{C}}
\def\T{\mathscr{T}}
\def\V{\mathscr{V}}
\def\Q{\mathcal{Q}}
\def\CM{\mathcal{C}}
\def\occ{\textrm{occ}}
\def\val{\textrm{val}}
\def\CC{\mathscr{CC}}
\def\hom{\textrm{Hom}}
\def\enD{\textrm{End}}
\def\rad{\textrm{rad}}
\def\top{\textrm{top}}
\title{The Cartan Matrix of a Brauer Configuration Algebra}
\author{Alex Sierra C.}
\date{}
\begin{document}
 
 \maketitle
 
 \begin{abstract}
  Using the combinatorial information of a Brauer configuration is possible to compute each entry of the Cartan matrix of the algebra associated to the Brauer configuration.
 \end{abstract}
\section{Introduction}\label{inttro01}

Recently was calculated the dimension of the center of a Brauer configuration algebra using the combinatorial information of the configuration only \cite[Theorem 4.9]{mya}. In the process of this calculation is strongly used the following result.

\begin{propo}\cite[Proposition 3.3]{mya}\label{001}
 Let $\L=K\Q/I$ be the Brauer configuration algebra induced by $\G=(\G_0,\G_1,\mu,\oo)$ and let $V\in\G_1$. If $v$ is the vertex in $\Q$ associated to the polygon $V$ then \[\textrm{dim}_Kv\L v=2+\sum_{\alpha\in V\cap\G_0}\occ(\alpha,V)(\occ(\alpha,V)\mu(\alpha)-1).\]
\end{propo}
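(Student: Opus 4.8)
The plan is to write down an explicit $K$-basis of $v\L v=e_v\L e_v$ and count it. Recall the construction of $\L=K\Q/I$ from $\G$: for each $\alpha\in\G_0$ there is a special $\alpha$-cycle $C_\alpha$, a cyclic path in $\Q$ of length $\val(\alpha)$ running, with multiplicity, through the polygons containing $\alpha$; the ideal $I$ is generated by the quadratic ``non-successor'' monomials, by the monomials $C_\alpha^{\mu(\alpha)}a$ (with $a$ the first arrow of $C_\alpha$), and by the differences $C_\alpha^{\mu(\alpha)}-C_\beta^{\mu(\beta)}$ of maximal cycle-powers based at a common vertex (allowing $\alpha=\beta$ with two different occurrences). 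It follows that $\L$ has a $K$-basis given by the $e_u$ ($u\in\Q_0$), one element $z_u$ for each $u$ (the common value of all non-extendable paths ending at $u$), and all paths of positive length that are proper subpaths of some $C_\alpha^{\mu(\alpha)}$, i.e.\ of length strictly between $0$ and $\val(\alpha)\mu(\alpha)$, regarded as distinct elements of $\L$. Intersecting this basis with $e_v\L e_v$ gives $\dim_K v\L v=2+N$, the $2$ being $e_v$ and $z_v$, and $N$ the number of positive-length non-maximal paths from $v$ to itself. So it remains to compute $N$.

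I would compute $N$ by sorting those paths according to which special cycle they traverse. A path of positive length with source $v$ is forced by the successor relations to run along a single $C_\alpha$, and since its source is $v$ we must have $\alpha\in V\cap\G_0$. Fix such an $\alpha$ and set $n=\occ(\alpha,V)$, $d=\val(\alpha)$, $m=\mu(\alpha)$; then $C_\alpha$ passes through $v$ at exactly $n$ of its $d$ cyclic positions, say at the positions forming a set $P\subseteq\mathbb Z/d\mathbb Z$ with $0\in P$ and $|P|=n$. A non-maximal path from $v$ to $v$ along $C_\alpha$ is determined uniquely by a choice of source position $p\in P$ together with a length $\ell$ with $1\le\ell\le dm-1$ such that the position reached after $\ell$ steps, namely $p+\ell$ reduced mod $d$, again lies in $P$; indeed distinct sources give distinct first arrows, for a fixed source distinct lengths give distinct paths, and distinct $\alpha$ use arrows of distinct cycles. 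For fixed $p$ the admissible $\ell$ are those with $\ell\bmod d\in P-p$, a set of size $n$; over $\ell\in\{1,\dots,dm-1\}$ this contributes $n$ in each of the $m-1$ complete residue blocks $\{kd+1,\dots,(k+1)d\}$ and $n-1$ in the final, length-$(d-1)$ block $\{(m-1)d+1,\dots,dm-1\}$ (the missing one being $\ell\equiv0$, a full loop, which would reach the socle rather than a non-maximal path). Hence there are $(m-1)n+(n-1)=nm-1$ admissible $\ell$ per source, and $n(nm-1)$ such paths along $C_\alpha$.

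Summing over $\alpha$ yields $N=\sum_{\alpha\in V\cap\G_0}\occ(\alpha,V)\big(\occ(\alpha,V)\mu(\alpha)-1\big)$, hence
\[
\dim_K v\L v \;=\; 2+\sum_{\alpha\in V\cap\G_0}\occ(\alpha,V)\big(\occ(\alpha,V)\mu(\alpha)-1\big),
\]
as claimed; note in particular that a truncated vertex of $V$ (where $\occ(\alpha,V)=\mu(\alpha)=1$) contributes $0$, as it should, and that one recovers $\dim_K v\L v=2$ when all occurrences equal $1$ and $\mu\equiv1$. The same count can be packaged module-theoretically: $\dim_K v\L v$ equals the multiplicity of the simple $S_v$ as a composition factor of the projective $\L e_v$, whose radical decomposes as a sum of uniserial submodules indexed by the occurrences of the vertices of $V$ and glued along the common socle $S_v$, and the $S_v$-multiplicity of that module is exactly $2+N$.

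I expect the genuine subtlety to be the socle bookkeeping in the first paragraph: one must use that \emph{all} non-extendable paths ending at $v$ --- and there are several, one per passage of $C_\alpha$ through $v$ for each $\alpha\in V\cap\G_0$ --- coincide with the single element $z_v$, so that the socle contributes $1$, not $\sum_\alpha\occ(\alpha,V)$, to the dimension; this is precisely the role of the type-one relations $C_\alpha^{\mu(\alpha)}-C_\beta^{\mu(\beta)}$, applied also between two occurrences of the same vertex, and it is the reason the constant term of the formula is $2$. The remainder is the elementary residue count of the second paragraph, which nonetheless must be carried out carefully exactly when $\occ(\alpha,V)>1$, i.e.\ when $C_\alpha$ meets $v$ more than once.
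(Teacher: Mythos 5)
Your proposal is correct and follows essentially the same route as the paper's (cited) proof: it invokes the Green--Schroll basis of classes of proper prefixes of the elements $C^{\mu(\alpha)}$ together with the identified socle element, and then counts, for each nontruncated $\alpha\in V\cap\G_0$, the proper subpaths from $v$ to $v$ along the special $\alpha$-cycles. Your residue-class bookkeeping giving $\occ(\alpha,V)\left(\occ(\alpha,V)\mu(\alpha)-1\right)$ per vertex is just an arithmetic repackaging of the special and non-special diagram argument of \cite[Section 3]{mya} that the paper relies on (and mirrors in the proof of Proposition \ref{018}).
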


In the proof of Proposition \ref{001} is implemented a combinatorial technique based on the fact that every basis element of the induced Brauer configuration algebra is the class of a prefix of an element of the form $C^{\mu(\alpha)}$, where $\alpha$ is a nontruncated vertex and $C$ is a special $\alpha$-cycle \cite[Proposition 3.3]{brau}. The combinatorial technique is fully explained in \cite[Section 3]{mya}, and its main tool is what we call here a special and non-special diagram associated to a nontruncated vertex. We may say that this diagram represents the graphical information of the orientation $\oo$ for $\G$ when is looked inside the induced quiver of $\G$. It is the flow course of the orientation throughout the arrows.

In Section \ref{sec3} we introduce the $(\alpha,v)$-intervals of a special and non-special diagram. These intervals are formed by dividing the diagram associated to $\alpha$ in intervals according to the occurrences of the vertex $v$ on the diagram. The use of these intervals is crucial in the proof of Proposition \ref{018}, which gives us the value of all the entries that are out of the main diagonal of the Cartan matrix of any Brauer configuration algebra. We will see the power and the information that can be obtained by making a very simple graphical interpretation of the orientation of a Brauer configuration.

\section{Some preliminaries about Brauer configuration algebras}\label{sec02}


A \textit{Brauer configuration} is a tuple $\G=(\G_0,\G_1,\mu,\oo)$ such that
\begin{enumerate}[(1)]
\item $\G_0$ is a finite set of elements that we call \textit{vertices};
\item $\G_1$ is a finite collection of finite labeled multisets\footnote{A multiset is a set where repetitions of elements are allowed.} each of them formed by vertices. We call each element of $\G_1$ a \textit{polygon};
\item $\mu:\G_0\to\mathbb{Z}_{>0}$ is a set function that we call the \textit{multiplicity function};
\item for $\alpha\in\G_0$ and $V\in\G_1$ we denote by $\occ(\alpha,V)$ the number of times that the vertex $\alpha$ appears in $V$. By $\val(\alpha)$ we denote the  integer value \[\val(\alpha):=\sum_{V\in\G_1}\occ(\alpha,V).\] A vertex $\alpha\in\G_0$ is called \textit{truncated} if $\val(\alpha)=\mu(\alpha)=1$. The \textit{orientation} $\oo$ means that for each nontruncated vertex $\alpha$ there is a chosen cyclic ordering of the polygons that contain $\alpha$, including repetitions. See Observation \ref{003}.
\end{enumerate}

Additionally to this we require that $\G$ satisfies
 \begin{enumerate}
  \item[C1.] Every vertex in $\G_0$ is a vertex in at least one polygon in $\G_1$.
  \item[C2.] Every polygon in $\G_1$ has at least two vertices (which can be the same).
  \item[C3.] Every polygon in $\G_1$ has at least one vertex $\alpha$ such that $\val(\alpha)\mu(\alpha)>1$.
 \end{enumerate}

\begin{obs}\label{003}
For each $\alpha\in\G_0$ such that $\val(\alpha)=t>1$ or $\mu(\alpha)>1$, let $V_1,\ldots, V_t$ be the list of polygons in which $\alpha$ occurs as a vertex, with a polygon $V$ occurring $\occ(\alpha,V)$ times in the list, that is $V$ occurs the number of times $\alpha$ occurs as a vertex in $V$. The cyclic order at vertex $\alpha$ is obtained by linearly ordering the list, say $V_{i_1}<\cdots<V_{i_t}$ and by adding $V_{i_t}<V_{i_1}$. We observe that any cyclic permutation of a chosen cyclic ordering at vertex $\alpha$ can represent the same ordering. That is, if $V_1<\cdots<V_t$ is the chosen cyclic ordering at vertex $\alpha$, so is a cyclic permutation such as $V_2<V_3<\cdots<V_t<V_1$ or $V_3<V_4<\cdots<V_t<V_1<V_2$. If $V_{i_1}<\cdots<V_{i_t}$ is a cyclic order at the vertex $\alpha$ we denote this by $\alpha:V_{i_1}<\cdots<V_{i_t}$ and we call it a \textit{successor sequence at} $\alpha$. Now, if we have that $\mu(\alpha)>1$ but $\val(\alpha)=1$ and $V$ is the only polygon where $\alpha$ belongs, we denote the successor sequence at $\alpha$ simply as $\alpha:V$.
\end{obs}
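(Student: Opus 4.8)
The sentence in Observation~\ref{003} that carries mathematical content is ``we observe that any cyclic permutation of a chosen cyclic ordering at vertex $\alpha$ can represent the same ordering''; the remainder of the paragraph merely fixes notation. So the plan is to make the phrase ``cyclic ordering'' precise and then verify this invariance, which is a short well-definedness check rather than a deep argument.

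First I would fix the list $L=(V_1,\dots,V_t)$ of occurrences of $\alpha$ --- a polygon $V$ appearing $\occ(\alpha,V)$ times, so that $t=\val(\alpha)$ --- and associate to each linear arrangement $V_{i_1}<\cdots<V_{i_t}$ of the entries of $L$, together with the added wrap-around $V_{i_t}<V_{i_1}$, its \emph{successor map} $s$ on the set of occurrences, given by $s(V_{i_j})=V_{i_{j+1}}$ with the subscript read modulo $t$. Such an $s$ is a permutation consisting of a single $t$-cycle, and I would take as the working definition that a \emph{cyclic ordering at} $\alpha$ \emph{is} such a successor map, equivalently the class of all linear arrangements inducing it. This is precisely the content of ``only the cyclic adjacencies matter, not a distinguished first element,'' and it is the notion silently in use when one writes $\alpha:V_{i_1}<\cdots<V_{i_t}$ for a successor sequence.

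Second I would carry out the one-step computation. If $L'=(V_{i_2},\dots,V_{i_t},V_{i_1})$ is the rotation of $V_{i_1}<\cdots<V_{i_t}$ by one place, then its wrap-around relation is $V_{i_1}<V_{i_2}$, and its successor map sends $V_{i_j}\mapsto V_{i_{j+1}}$ for $2\le j\le t-1$, sends $V_{i_t}\mapsto V_{i_1}$, and sends $V_{i_1}\mapsto V_{i_2}$ --- literally the same map $s$ as before. A trivial induction on the number of shifts then shows that every cyclic permutation of $V_{i_1}<\cdots<V_{i_t}$ induces $s$, hence represents the same cyclic ordering, which is the claim. If the converse is also wanted, I would note that any linear arrangement inducing a given $t$-cycle $s$ must read $W, s(W), s^{2}(W),\dots,s^{t-1}(W)$ for some starting occurrence $W$, and letting $W$ range over the $t$ occurrences yields exactly the $t$ cyclic shifts of one arrangement.

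There is no genuine obstacle here; the only points requiring care are the bookkeeping of the wrap-around relation under a shift and, more essentially, being explicit that ``cyclic ordering'' means an equivalence class of linear orders under rotation (equivalently, a single-cycle successor map). Once that is pinned down the observation is immediate, and it is exactly this well-definedness that later licenses representing a successor sequence at $\alpha$ by whichever cyclic shift is convenient, as when building the special and non-special diagrams of Section~\ref{sec3}.
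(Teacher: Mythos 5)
Your proposal is correct: the paper offers no proof of Observation~\ref{003} at all, treating the invariance under cyclic permutation as a self-evident definitional remark, and your formalization via the successor map (equivalently, the equivalence class of linear arrangements under rotation) is exactly the content the paper silently assumes. Nothing in your argument conflicts with how the observation is later used, so it can stand as the intended justification.
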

\begin{ejem}\label{002}
Let $\G=(\G_0,\G_1,\mu,\oo)$ be the Brauer configuration given by the following data: $\G_0=\{1,2,3,4\},\G_1=\{V_1,V_2,V_3,V_4\}$ where $V_1=\{1,2\},V_2=\{1,2\},V_3=\{1,1,3,3\}$ and $V_4=\{3,4\}$. Observe that $V_1$ and $V_2$ have both the same set of vertices, however they are considered as different polygons in the configuration. If we define the multiplicity function as $\mu(3)=\mu(4)=1$ and $\mu(1)=\mu(2)=2$, we see that the vertex 4 is truncated. For the orientation $\oo$ we chose the following successor sequences.\[\begin{array}{rcl}1 & : & V_1<V_2<V_3<V_3;\\2 & : & V_1<V_2;\\3 & : & V_3<V_4<V_3.\end{array}\] Also observe that in this orientation, and any other chosen orientation, the number of times that appears a polygon in a successor sequence must coincide with the number of times that the associated vertex appears in the polygon.
\end{ejem}
For $\alpha\in\G_0$ a nontruncated vertex wiht $\val(\alpha)>1$, let $\alpha:V_{i_1}<\cdots<V_{i_t}$ be a successor sequence of $\alpha$, where $t=\val(\alpha)$. We say that $V_{i_{j+1}}$ is the \textit{successor} of $V_{i_j}$, for all $1\le j\le t$, and $V_{j_{t+1}}=V_{i_1}$. If $\val(\alpha)=1$ but $\mu(\alpha)>1$, and $V$ is the only polygon where $\alpha$ belongs, then we say that $V$ is its own successor at $\alpha$. We can also have that a polygon is its own successor at a vertex $\alpha$ where $\val(\alpha)>1$. This is the case for the polygon $V_3$ in the Example \ref{002} above. It is its own successor at the vertex 1.

Now, we define the induced quiver by a Brauer configuration. For $\G$ a Brauer configuration let $\Q$ the quiver induced by
\begin{itemize}
\item The set of vertices of $\Q$ is in one-to-one correspondence whith the set of polygons of $\G$. If $V$ is a polygon in $\G_1$, we will denote the associated vertex in $\Q$ by $v$, and we say that $v$ is the vertex in $\Q$ associated to $V$.
\item If the polygon $V'$ is a successor to the polygon $V$ at $\alpha$, there is an arrow from $v$ to $v'$, where $v$ is the vertex in $\Q$ associated to $V$, and $v'$ is the vertex in $\Q$ associated to $V'$.
 \end{itemize}
If we denote by $\T_{\G}$ the set of all truncated vertices of $\G$ and $\Q_1$ the collection of all the arrows in the induced quiver $Q$, then it is not difficult to prove that \[|\Q_1|=\sum_{\G_0\setminus\T_{\G}}\val(\alpha).\] For each nontruncated vertex $\alpha$ in $\G_0$ with $\val(\alpha)=t>1$ and successor sequence $\alpha:V_{i_1}<\cdots<V_{i_t}$, we have a corresponding sequence of arrows in the induced quiver $\Q$

\begin{equation}\label{004}
v_{i_1}\xrightarrow{a^{(\alpha)}_{j_1}} v_{i_2} \xrightarrow{a^{(\alpha)}_{j_2}} \cdots \xrightarrow{a^{(\alpha)}_{j_{t-1}}}v_{i_{t}}\xrightarrow{a^{(\alpha)}_{j_{t}}} v_{i_1}.
\end{equation}
Let $C_l=a^{(\alpha)}_{j_{l}}a^{(\alpha)}_{j_{l+1}}\cdots a^{(\alpha)}_{j_{t}}a^{(\alpha)}_{j_{1}}\cdots a^{(\alpha)}_{j_{l-1}}$ be the oriented cycle in $\Q$, for $1\le l\le t$. We call any of these cycles a \textit{special} $\alpha$-\textit{cycle}. We observe that when $\alpha$ is a nontruncated vertex such that $\val(\alpha)=1$, we have only one special $\alpha$-cycle, which is a loop at the vertex in $\Q$ associated to the unique polygon containing $\alpha$. Now, let $V$ be a fixed polygon in $\G_1$ such that $\alpha$ is a vertex in $V$ and $\occ(\alpha,V)=s\ge1$. Then there are $s$ indices $l_1,\ldots,l_s$ such that $V=V_{i_{l_r}}$, for every $1\le r\le s$. We call any of the cycles $C_{l_1},\ldots,C_{l_s}$ a \textit{special} $\alpha$-\textit{cycle at} $v$, and we denote the collection of these cycles in $\Q$ by $\C_{(\alpha)}^{\,v}$. If we denote by $\CC_{(\alpha)}$ the collection of all the special $\alpha$-cycles in $\Q$ and define $\V_{(\alpha)}=\{\,V\in\G_1\,|\,\alpha\in V\,\}$, the set of polygons containing $\alpha$, is easy to prove that
\begin{equation}\label{005}\CC_{(\alpha)}=\bigcup_{V\in\V_{(\alpha)}}\C_{(\alpha)}^{\,v}.\end{equation} Once again, for the particular case of a nontruncated vertex $\alpha$ such that $\val(\alpha)=1$, the collection $\CC_{(\alpha)}$ is just the set consisting of the unique loop at the vertex in $\Q$ associated to the unique polygon containing $\alpha$.

\begin{ejem}\label{007}
The quiver $\Q$ induced by the configuration $\G$ of Example \ref{002} is
\begin{equation}\label{006}
\begin{split}
\xymatrix{ & & v_4\ar@/_1.5pc/[dd]_{a^{(3)}_2} & & \\  & & & & \\  & &v_3\ar@/_1pc/[lldd]_{a^{(1)}_4}\ar@(ul,ur)^{a^{(3)}_3}\ar@(dr,dl)^{a^{(1)}_3}\ar@/_1.5pc/[uu]_{a^{(3)}_1} & &\\ & & & \\v_1\ar@/_2.4pc/[rrrr]_{a^{(1)}_1}\ar@/_1pc/[rrrr]^{a^{(2)}_1} & & & & v_2\ar@/_1pc/[uull]_{a^{(1)}_2}\ar@/_1pc/[llll]_{a^{(2)}_2}}
\end{split}
\end{equation}
As we can see every arrow is induced by a successor sequence. For example, the successor sequence at vertex 1 induces in $\Q$ the sequence of arrows \[1:v_1\xrightarrow{a^{(1)}_1} v_2\xrightarrow{a^{(1)}_2}v_3\xrightarrow{a^{(1)}_3}v_3\xrightarrow{a^{(1)}_4}v_1;\] and the successor sequence at 3 induces the sequence of arrows \[3:v_3\xrightarrow{a^{(3)}_1}v_4\xrightarrow{a^{(3)}_2}v_3\xrightarrow{a^{(3)}_3}v_3.\]
\end{ejem}
For the induced quiver $\Q$ of the Brauer configuration $\G=(\G_0,\G_1,\mu,\oo)$ define the set
\begin{equation}\label{008}
\CC:=\bigcup_{\alpha\in\G_0\setminus\T_{\G}}\CC_{(\alpha)},
\end{equation}
 and let $f:\CC\to\Q_1$ be the map which sends a special cycle to its first arrow. Now, for the field $K$ let's consider in the path algebra $K\Q$ the following type of relations.\\
 
\noindent\textit{Relations of type one.} It is the subset of $K\Q_{\G}$ \[\bigcup_{V\in\G_1}\left(\bigcup_{\alpha,\beta\in V\setminus\T_{\G}}\left\{\,C^{\mu(\alpha)}-D^{\mu(\beta)}\,|\,C\in\C_{(\alpha)}^{\,v},D\in\C_{(\beta)}^{\,v}\,\right\}\right).\]
\\
\textit{Relations of type two.} It is the subset of $K\Q_{\G}$ \[\bigcup_{\alpha\in\G_0\setminus\T_{\G}}\left\{\,C^{\mu(\alpha)}f(C)\,|\,C\in\CC_{(\alpha)}\,\right\}.\]
\\
\textit{Relations of type three.} It is the set of all quadratic monomial relations of the form $ab$ in $K\Q_{\G}$ where $ab$ is not a subpath of any special cycle.\\

\noindent We denote by $\rho_{\G}$ the union of these three types of relations.

\begin{defin}\label{009}
Let $K$ be a field and $\G$ a Brauer configuration. The \textit{Brauer configuration algebra} $\L$ \textit{associated to} $\G$ is defined to be $K\Q/I$, where $\Q$ is the quiver induced by $\G$ and $I$ is the ideal in $K\Q$ generated by the set of relations $\rho_{\G}$.
\end{defin}

\begin{obs}\label{016}
As we mentioned in Section \ref{sec02} an element $V\in\G_1$ is a \textit{multiset}, which is a set where the elements in it can appear more than once. So, maybe an expression as $V\cap\G_0$ could have not a very clear meaning because is the intersection of two objects of different nature. But in general we may affirm without any worry that the \textit{intersection between a set and a multiset} is another set, which is formed by the collection of all the elements in common that appear in both the set and the multiset. For example, consider the configuration $\G$ given in Example \ref{002}. In this example the polygon $V_3$ is equal to the multiset $\{1,1,3,3\}$, then according to we just had said we will have that $V_3\cap\G_0=\{1,3\}$.
\end{obs}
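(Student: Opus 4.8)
The plan is to turn the informal symbol ``$V\cap\G_0$'' into an unambiguous piece of notation and then to check that, once interpreted this way, the assertion of the observation holds verbatim, including on the sample computation with $V_3$. The only genuine decision to be made is what $\cap$ should mean when its left operand is a multiset and its right operand is an ordinary set; once that is fixed, everything is immediate.

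First I would attach to each polygon $V\in\G_1$ its \emph{support}, the set
\[
\underline{V}:=\{\,\alpha\in\G_0\mid\occ(\alpha,V)\ge 1\,\}.
\]
This is a bona fide set, not a multiset: it is well defined because the counting function $\occ(-,V)\colon\G_0\to\mathbb{Z}_{\ge 0}$ is well defined for the finite labeled multiset $V$, and it is a subset of the finite set $\G_0$ because, by item (2) of the definition of a Brauer configuration, every element occurring in a polygon is a vertex. I would then simply \emph{define} $V\cap S:=\underline{V}\cap S$ for any set $S$; in particular $V\cap\G_0=\underline{V}\cap\G_0=\underline{V}$. With this convention the expression $V\cap\G_0$ is a set by construction, and its elements are precisely the vertices that occur in $V$ --- ``the collection of all the elements in common that appear in both the set and the multiset'', exactly as stated.

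To finish I would run the example from the statement: for the configuration of Example \ref{002} the polygon $V_3$ is the multiset $\{1,1,3,3\}$, so $\occ(1,V_3)=\occ(3,V_3)=2$ while $\occ(2,V_3)=\occ(4,V_3)=0$, hence $\underline{V_3}=\{1,3\}$ and therefore $V_3\cap\G_0=\{1,3\}$, as the observation claims.

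There is no hard step here: the content is purely the adoption of a convention, and the only point worth a line of care is the set-hood and well-definedness of $\underline{V}$, which is guaranteed by the finiteness built into conditions (1) and (2) on a Brauer configuration. The reason the observation deserves to be isolated is expository rather than mathematical: the notation $V\cap\G_0$ recurs throughout the paper --- for instance under the summation sign in Proposition \ref{001} --- and the reader should parse such a sum unambiguously as indexed by the distinct vertices of $V$, each counted once.
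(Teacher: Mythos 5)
Your proposal is correct and matches the paper's treatment: the observation is just a convention (the intersection of a multiset with a set is the set of common elements, i.e.\ the support of the multiset inside $\G_0$), and your formalization via $\occ(-,V)$ together with the check that $V_3\cap\G_0=\{1,3\}$ is exactly the content the paper intends. No gap to report.
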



\section{Basis in $v\L w$}\label{sec3}
A particular case where a vector basis for $v\L w$ is calculated can be found in \cite[Section 3]{mya}, when $v=w$. Actually, we mentioned something about this at the beginning of the introduction (see Proposition \ref{001}). In the present section we consider all the other possible cases, which are those when the vertices $v$ and $w$ in the induced quiver associated to the polygons $V$ and $W$, respectively, are different. In short words, when $V\neq W$. We now refer the reader to \cite[Section 3]{mya} to read the Subsection 3.1 very carefully. In that subsection appears, with all the details, the combinatorial technique that we are going to use to calculate a vector basis for $v\L w$, when $v\neq w$, by adding just a couple of small  technical details. So we stick to the same notation used there to our present task.\\


Let's do a brief remind of the notation and the terminology. Let $\G$ be a Brauer configuration and let $\Q$ be the induced quiver associated to $\G$. For $\alpha\in\G_0$ a fixed vertex with $\val(\alpha)>1$, let $\alpha:V_{i_1}<\cdots<V_{i_{\val(\alpha)}}$ be its successor sequence. Then in the quiver $\Q$ we have a sequence of arrows

\begin{equation}\label{017}
v_{i_1}\stackrel{a^{(\alpha)}_{j_1}}{\longrightarrow} v_{i_2} \stackrel{a^{(\alpha)}_{j_2}}{\longrightarrow} \cdots \stackrel{a^{(\alpha)}_{j_{\val(\alpha)-1}}}{\longrightarrow}v_{i_{\val(\alpha)}}\stackrel{a^{(\alpha)}_{j_{\val(\alpha)}}}{\longrightarrow} v_{i_1}.
\end{equation}
 Now, if $V$ is a polygon which appears in the successor sequence of $\alpha$ and $\occ(\alpha,V)>1$ then the sequence of arrows in (\ref{017}) can be transformed and represented by the following \textit{Special and Non-Special diagram associated to} $\alpha$.
 
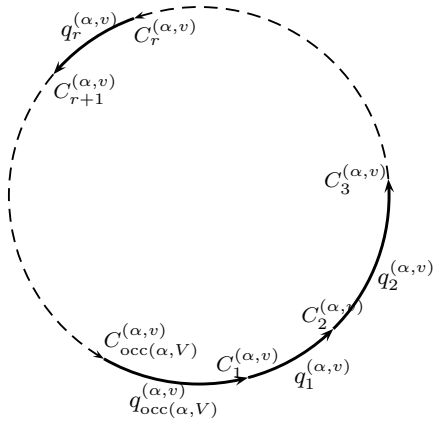
\begin{figure}[H]
\centering
 \begin{pspicture}(-3.2,-3.2)(3.2,3.2)
  \psarc[linewidth=1.1pt]{->}(0,0){2.5}{-120}{-75}
  \psarc[linewidth=1.1pt]{->}(0,0){2.5}{-75}{-45}
  \psarc[linewidth=1.1pt]{->}(0,0){2.5}{-45}{5}
  \psarc[linewidth=0.7pt,linestyle=dashed]{->}(0,0){2.5}{5}{110}
  \psarc[linewidth=1.1pt]{->}(0,0){2.5}{110}{140}
  \psarc[linewidth=0.7pt,linestyle=dashed]{->}(0,0){2.5}{140}{240}
  \rput[t](-0.326,-2.479){\footnotesize$q^{(\alpha,v)}_{\occ(\alpha,V)}$}
  \rput[tl](1.250,-2.165){\footnotesize$q^{(\alpha,v)}_{1}$}
  \rput[tl](2.349,-0.855){\footnotesize$q^{(\alpha,v)}_{2}$}
  \rput[b](-1.434,2.048){\footnotesize$q^{(\alpha,v)}_{r}$}
  \rput[bl](-1.250,-2.165){\scriptsize$C^{(\alpha,v)}_{\occ(\alpha,V)}$}
  \rput[b](0.647,-2.415){\scriptsize$C^{(\alpha,v)}_{1}$}
  \rput[b](1.768,-1.768){\scriptsize$C^{(\alpha,v)}_{2}$}
  \rput[r](2.490,0.218){\scriptsize$C^{(\alpha,v)}_{3}$}
  \rput[tl](-0.855,2.349){\scriptsize$C^{(\alpha,v)}_{r}$}
  \rput[tl](-1.915,1.607){\scriptsize$C^{(\alpha,v)}_{r+1}$}
 \end{pspicture}
 \caption{\small Special and Non-special diagram associated to $\alpha$.}\label{fig3}
\end{figure}
As we can see, the collection $\C^{\,v}_{(\alpha)}=\left\{C^{(\alpha,v)}_{1},\ldots,C^{(\alpha,v)}_{\occ(\alpha,V)}\right\}$ is formed by all the special $\alpha$-cycles at $v$ and the collection $\neg\C^{\,v}_{(\alpha)}=\left\{q^{(\alpha,v)}_{1},\ldots,q^{(\alpha,v)}_{\occ(\alpha,V)}\right\}$ by all the non-special $\alpha$-cycles at $v$.

We divide in \textit{intervals} the diagram in Fig. \ref{fig3} according to the order in which occurs the vertex $v$ on the diagram above\footnote{Remember that after to fix one of the occurrences of the vertex $v$ we label it as \textit{1st} $v$ and then in counter clockwise continue labeling the other occurrences of $v$ as \textit{2nd} $v$, \textit{3rd} $v\ldots$ etc \cite[Pages 296 and 297]{mya}.}. This division is made as follows
\begin{itemize}
\item \textit{1st} $(\alpha,v)$-\textit{interval}: is the segment formed by all the vertices in $\Q$ that appear, in the same order of occurrence and repetitions included, in the special and non-special diagram associated to $\alpha$ between the \textit{1st} $v$ and the \textit{2nd} $v$.

\item \textit{2nd} $(\alpha,v)$-\textit{interval}: is the segment formed by all the vertices in $\Q$ that appear, in the same order of occurrence and repetitions included, in the special and non-special diagram associated to $\alpha$ between the \textit{2nd} $v$ and the \textit{3rd} $v$.
\item And so on$\ldots$
\end{itemize}

When $\alpha$ and $v$ are clear from the context we just call them intervals instead of $(\alpha,v)$-intervals. Now, if in Fig. \ref{fig3} we delete the $q^{(\alpha,v)}_{i}$\,'s and instead of the $C^{(\alpha,v)}_i$\,'s we put the corresponding $i$-\textit{th} $v$'s, and making $s=\occ(\alpha,V)$, the diagram that represents all the intervals would be like in the Fig. \ref{fig4}.

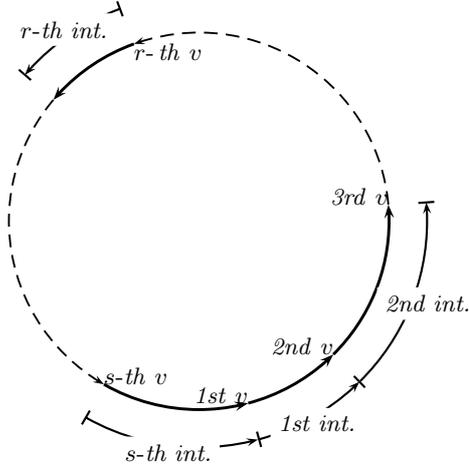
\begin{figure}[H]
\centering
 \begin{pspicture}(-3.2,-3.2)(3.2,3.2)
  \psarc[linewidth=1.1pt]{->}(0,0){2.5}{-120}{-75}
  \psarc[linewidth=1.1pt]{->}(0,0){2.5}{-75}{-45}
  \psarc[linewidth=1.1pt]{->}(0,0){2.5}{-45}{5}
  \psarc[linewidth=0.7pt,linestyle=dashed]{->}(0,0){2.5}{5}{110}
  \psarc[linewidth=1.1pt]{->}(0,0){2.5}{110}{140}
  \psarc[linewidth=0.7pt,linestyle=dashed]{->}(0,0){2.5}{140}{240}
  
  \psarc[linewidth=0.8pt]{|->|}(0,0){3}{-75}{-45}
  \psarc[linewidth=0.8pt]{->|}(0,0){3}{-45}{5}
  \psarc[linewidth=0.8pt]{|->|}(0,0){3}{110}{140}
  \psarc[linewidth=0.8pt]{|->}(0,0){3}{-120}{-75}
  
  \rput*(1.55,-2.685){\footnotesize\textit{1st int.}}
  \rput*(3.007,-1.094){\footnotesize\textit{2nd int.}}
  \rput*(-1.778,2.539){\footnotesize$r$-\textit{th int.}}
  \rput*(-0.405,-3.073){\footnotesize$s$-\textit{th int.}}
  
  \rput[br](0.647,-2.415){\footnotesize\textit{1st} $v$}
  \rput[br](1.768,-1.768){\footnotesize\textit{2nd} $v$}
  \rput[br](2.490,0.218){\footnotesize\textit{3rd} $v$}
  \rput[tl](-0.855,2.349){\footnotesize$r$-\,\textit{th} $v$}
  \rput[bl](-1.250,-2.165){\footnotesize$s$-\textit{th} $v$}
 \end{pspicture}
 \caption{\small The $(\alpha,v)$-intervals}\label{fig4}
\end{figure}

Now, if $W\in\V_{(\alpha)}$, where $\V_{(\alpha)}=\left\{W\in\G_0\,|\,\alpha\in W\right\}$, with $W\neq V$, and $w$ is the vertex in $\Q$ associated to $W$, we denote by $\occ^{(\alpha,v)}_i(w)$ the number of times that the vertex $w$ appears in the $i$-th interval, for every $1\le i\le\occ(\alpha,V)$. It is clear that $\occ^{(\alpha,v)}_i(w)\ge0$, for all $1\le i\le\occ(\alpha,V)$, and \begin{equation}\label{024}\sum_{i=1}^{\occ(\alpha,V)}\occ^{(\alpha,v)}_i(w)=\occ(\alpha,W).\end{equation}

The following lemma is almost obvious and easy to prove. We left the proof to the reader.

\begin{lem}\label{015}
Let $\L=K\Q/I$ be the Brauer configuration algebra associated to the Brauer configuration $\G$. Let $V,W$ be polygons in $\G_1$, and let $v,w$ be the respective associated vertices in $\Q$. Then \[v\L w\neq\{0\}\iff\overline{V}\cap\overline{W}\neq\emptyset,\] where $\overline{V}=V\cap\G_0$ and $\overline{W}=W\cap\G_0$.
\end{lem}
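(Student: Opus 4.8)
The plan is to reduce the statement to the explicit $K$-basis of $\L$ provided by \cite[Proposition 3.3]{brau}: every nonzero element of $\L$ is a $K$-linear combination of classes of paths, each of which is either a vertex $e_u$ or a prefix of a path of the form $C^{\mu(\alpha)}$, with $\alpha$ a nontruncated vertex and $C$ a special $\alpha$-cycle. Before invoking this I would dispose of the trivial case $V=W$: then $v=w$, the class $e_v$ is a nonzero element of $v\L v$ since $I$ is admissible, and $\overline V\cap\overline W=\overline V$ is nonempty by axiom C2, so both sides of the equivalence hold. Hence from now on one assumes $V\neq W$, equivalently $v\neq w$; since $e_ve_w=0$, every element of $v\L w=e_v\L e_w$ is then a combination of classes of \emph{positive-length} paths from $v$ to $w$.

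For the implication $\overline V\cap\overline W\neq\emptyset\Rightarrow v\L w\neq\{0\}$, fix $\alpha\in\overline V\cap\overline W$. Since $\alpha$ occurs in the two distinct polygons $V$ and $W$ we have $\val(\alpha)\ge2$, so $\alpha$ is nontruncated and admits special $\alpha$-cycles. Let $C$ be a special $\alpha$-cycle at $v$ (in the notation of Figure \ref{fig3}, one of the $C^{(\alpha,v)}_i$); by construction $C$ starts at $v$. The vertices of $\Q$ traversed by $C$ are exactly those associated to the polygons appearing in the successor sequence of $\alpha$, i.e. to the polygons in $\V_{(\alpha)}$; as $W\in\V_{(\alpha)}$ and $v\neq w$, the vertex $w$ is met strictly before $C$ closes up. Therefore the initial segment $p$ of $C$ running from $v$ to the first occurrence of $w$ is a proper nonempty prefix of $C$, hence a prefix of $C^{\mu(\alpha)}$, and by \cite[Proposition 3.3]{brau} its class is a nonzero element of $v\L w$.

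For the converse one argues by contraposition: assume $\overline V\cap\overline W=\emptyset$ and, toward a contradiction, that $v\L w\neq\{0\}$. Then some basis element lies in $v\L w$, and by the remark above it is the class of a positive-length prefix $p$ of some $C^{\mu(\alpha)}$, with $\alpha$ nontruncated and $C$ a special $\alpha$-cycle. As in the previous paragraph, every vertex occurring along $C^{\mu(\alpha)}$ is associated to a polygon in $\V_{(\alpha)}$; since $p$ starts at $v$ and ends at $w$, both $V$ and $W$ belong to $\V_{(\alpha)}$, that is $\alpha\in\overline V\cap\overline W$, contradicting emptiness. This establishes the equivalence.

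The one point that genuinely involves the ideal $I$, and hence the only place where care is needed, is the claim that the exhibited path $p$ really represents a \emph{nonzero} element of $\L$: a priori a long enough path could be annihilated by a relation of type two or three, or identified with another path by a relation of type one. This is precisely what the prefix characterization \cite[Proposition 3.3]{brau} rules out, so the argument amounts to citing that result together with the elementary structural observation about which quiver vertices can lie on a special $\alpha$-cycle; there is no genuine computation, in agreement with the author's remark that the lemma is ``almost obvious''.
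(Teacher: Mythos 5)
The paper gives no proof of Lemma \ref{015} (it is explicitly left to the reader), so there is nothing to compare line by line; your argument is correct and uses exactly the machinery the paper relies on elsewhere, namely the basis of classes of prefixes of the cycles $C^{\mu(\alpha)}$ from \cite[Proposition 3.3]{brau} together with the observation that the vertices traversed by a special $\alpha$-cycle are precisely those associated to the polygons containing $\alpha$. Your handling of both directions (for $\alpha\in\overline{V}\cap\overline{W}$, the proper nonempty prefix of a special $\alpha$-cycle at $v$ ending at the first occurrence of $w$ gives a nonzero element of $v\L w$; conversely any basis path from $v$ to $w$ lies on some $C^{\mu(\alpha)}$, forcing $\alpha\in\overline{V}\cap\overline{W}$), as well as the trivial case $V=W$, is evidently the intended ``almost obvious'' argument.
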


\begin{propo}\label{018}
Let $\L=K\Q/I$ be the Brauer configuration algebra induced by $\G=(\G_0,\G_1,\mu,\oo)$. If $V,W\in\G_1$, with $V\neq W$, then \[\textrm{dim}_Kv\L w=\sum_{\alpha\in\overline{V}\cap\overline{W}}\mu(\alpha)\occ(\alpha,V)\occ(\alpha,W),\] where $v$ and $w$ are the vertices in $\Q$ associated to $V$ and $W$ respectively, and $\overline{V}=\G_0\cap V$, $\overline{W}=\G_0\cap W$.
\end{propo}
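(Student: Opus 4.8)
The plan is to construct an explicit $K$-basis of $v\L w$ by running the same combinatorial machine used for Proposition \ref{001}, but now ``reading across'' from $v$ to $w$ instead of cyclically back to $v$. By \cite[Proposition 3.3]{brau} every nonzero element of $v\L w$ is (the class of) a path $p$ from $v$ to $w$ which is a prefix of some $C^{\mu(\alpha)}$ for a nontruncated vertex $\alpha$ and a special $\alpha$-cycle $C$; and by the relations of type three such a $p$ must lie entirely inside the special and non-special diagram associated to a single $\alpha$. By Lemma \ref{015} we may assume $\overline{V}\cap\overline{W}\neq\emptyset$, otherwise both sides vanish. So I would fix $\alpha\in\overline{V}\cap\overline{W}$ and count the paths from $v$ to $w$ that live in the diagram of $\alpha$ and are legal modulo $I$; then sum over $\alpha$, after checking that no nonzero element is counted for two different $\alpha$'s (this is exactly the reason the relations of type three are in $I$: a path through two different vertices' flows is killed).

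The heart of the count is the interval decomposition introduced before the statement. Fix $\alpha$ with $\occ(\alpha,V)=s$ and an occurrence of $v$ on the diagram; a path from $v$ to $w$ inside the flow of $\alpha$, modulo the relation $C^{\mu(\alpha)}f(C)=0$, is determined by a choice of starting occurrence of $v$ (there are $\occ(\alpha,V)$ of them), a number of full laps around the diagram (which, because of the type-two relation $C^{\mu(\alpha)}f(C)$, may range over $0,1,\dots,\mu(\alpha)-1$, giving $\mu(\alpha)$ choices), and finally a choice of occurrence of $w$ at which to stop — and by \eqref{024} the occurrences of $w$ reachable within one lap, summed over the $s$ intervals, total $\sum_{i=1}^{s}\occ^{(\alpha,v)}_i(w)=\occ(\alpha,W)$. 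One must be slightly careful at the two ends: relations of type one identify $C^{\mu(\alpha)}$ with $D^{\mu(\beta)}$ only when both close up at the same polygon $v$, so they do not create extra coincidences among the \emph{proper} prefixes ending at $w\neq v$; and the type-two relation means a path cannot be extended past $\mu(\alpha)$ laps, so there is no over-counting from ``wrapping too far.'' Putting these together, the number of basis paths from $v$ to $w$ coming from $\alpha$ is $\mu(\alpha)\,\occ(\alpha,V)\,\occ(\alpha,W)$.

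Finally I would verify linear independence and spanning: spanning is \cite[Proposition 3.3]{brau} together with the observation above that every relevant path is pinned to one $\alpha$-diagram; linear independence follows because distinct such paths are distinct paths in $K\Q$ and none of the generators of $I$ is a linear combination involving two of them (type three kills monomials not in a special cycle, type one relates only two full power-cycles at the same vertex, type two kills an element that is \emph{not} a proper prefix ending at $w\neq v$). Summing $\mu(\alpha)\occ(\alpha,V)\occ(\alpha,W)$ over $\alpha\in\overline{V}\cap\overline{W}$ gives the claimed formula.

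The step I expect to be the genuine obstacle is the bookkeeping at the boundary between consecutive $\alpha$'s diagrams and at the ``seam'' of a single diagram: making fully precise that two paths from $v$ to $w$ arising from different nontruncated vertices $\alpha\neq\beta$ are never identified in $\L$ (so the sum over $\alpha$ is a sum of dimensions of independent subspaces, not an over-count), and dually that within one diagram the only identifications are the harmless ones coming from cyclic relabelling of the starting occurrence of $v$. Everything else is the routine prefix-counting already rehearsed in \cite[Section 3]{mya}.
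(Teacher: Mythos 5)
Your proposal is correct and follows essentially the same route as the paper: it counts, for each $\alpha\in\overline{V}\cap\overline{W}$, the prefixes of powers of special $\alpha$-cycles from $v$ to $w$ via the $(\alpha,v)$-interval decomposition and \eqref{024}, obtaining $\mu(\alpha)\occ(\alpha,V)\occ(\alpha,W)$ per vertex and summing over $\alpha$. The only difference is that you make explicit the independence bookkeeping (no identifications among proper prefixes ending at $w\neq v$, and none across distinct $\alpha$'s), which the paper leaves implicit by appealing to the basis description of \cite[Proposition 3.3]{brau}.
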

\begin{proof}
Let $V,W\in\G_1$ be two different polygons of the configuration, and let $v$ and $w$ the vertices in $\Q$ associated to $V$ and $W$ respectively. By Lemma \ref{015} is clear that dim$_Kv\L w=0\iff\overline{V}\cap\overline{W}=\emptyset$. So, let $\alpha$ be a vertex in $\overline{V}\cap\overline{W}$ and let's suppose first that $\occ(\alpha,V)=1$ and $\occ(\alpha,W)\ge1$. In this case we have an only special $\alpha$-cycle at $v$, given by $C^{(\alpha,v)}_1$. Let $w$ be a fixed vertex of those that appear in the special and non-special diagram associated to $\alpha$. If we denote by $q_{(v,w)}$ the composition of all the arrows in the special and non-special diagram between $v$ and this $w$, we obtain $\mu(\alpha)$ paths in $\Q$ from $v$ to $w$ given by $\left(C^{(\alpha,v)}_1\right)^kq_{(v,w)}$, for $0\le k<\mu(\alpha)$. Now, if we do the same for the rest of $w$'s in the special and non-special diagram associated to $\alpha$ we obtain a total of $\mu(\alpha)\occ(\alpha,W)=\mu(\alpha)\occ(\alpha,V)\occ(\alpha,W)$ paths from $v$ to $w$ in $\Q$ associated to the vertex $\alpha$.

Now, suppose that $\occ(\alpha,V)>1$, $\occ(\alpha,W)\ge1$, and set $s=\occ(\alpha,V)$. Without loss of generality, also suppose that  $\occ^{(\alpha,v)}_s(w)>0$, i.e, there is at least one $w$ in the $s$-th interval in the special and non-special diagram associated to $\alpha$. Let $w$ be one of the occurrences of this vertex in the $s$-th interval, and let $q_{(v,w)}$ be the composition of all the arrows in the $s$-th interval between the $s$-\textit{th} $v$ and $w$. This is represented in Fig. \ref{fig2} 
\begin{figure}[H]
\centering
 \begin{pspicture}(-3.2,-3.2)(3.2,3.2)
  \psarc[linewidth=1.1pt]{->}(0,0){2.5}{-150}{-135}
  \psarc[linewidth=1.1pt]{->}(0,0){2.5}{-135}{-120}
  \psarc[linewidth=1.1pt,linestyle=dotted]{-}(0,0){2.5}{-120}{-100}
  \psarc[linewidth=1.1pt]{->}(0,0){2.5}{-100}{-85}
  \psarc[linewidth=1.1pt]{->}(0,0){2.5}{-85}{-70}
  \psarc[linewidth=1.1pt,linestyle=dotted]{-}(0,0){2.5}{-70}{-50}
  \psarc[linewidth=1.1pt]{->}(0,0){2.5}{-50}{-35}
  \psarc[linewidth=1.1pt]{->}(0,0){2.5}{-35}{-20}
  \psarc[linewidth=0.7pt,linestyle=dashed]{->}(0,0){2.5}{-20}{225}
  \rput[bl](-1.768,-1.768){\footnotesize$s$-\textit{th} $v$}
  \rput[tr](-1.768,-1.768){$\,\,v\,\,$}
  \rput[t](0.227,-2.590){$\,\,w\,\,$}
  \rput[br](2.048,-1.434){\footnotesize\textit{1st} $v$}
  \rput[tl](2.048,-1.434){$\,v\,$}
  \psarc[linewidth=1.01pt]{|->|}(0,0){3}{-135}{-85}
  \rput*(-1.026,-2.819){\small$q_{(v,w)}$}
  \end{pspicture}
  \caption{\small Here $q_{(v,w)}$ represents the composition of all the arrows in the special and non-special diagram associated to $\alpha$ between the $s$-\textit{th} $v$ and the fixed vertex $w$ in the $s$-th interval.}\label{fig2}
 \end{figure}
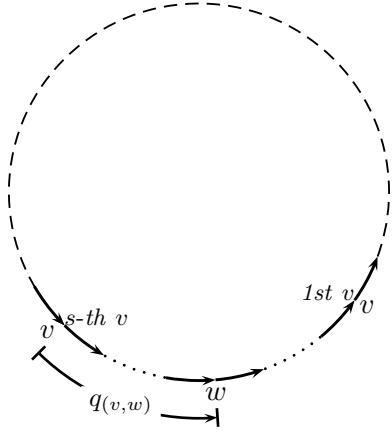
 Then we have that all the possible paths in $\Q$ associated to $\alpha$ that start at $v$, any $v$ of the special and non-special diagram, and finish at this vertex $w$ can be listed as
 \[\begin{array}{l}
 \left(C^{(\alpha,v)}_{s}\right)^kq_{(v,w)}\\
 \left(C^{(\alpha,v)}_{s-1}\right)^kq^{(\alpha,v)}_{s-1}q_{(v,w)}
 \end{array}
 \]
 
 \[\vdots\]
 
 \[\begin{array}{l}
 \left(C^{(\alpha,v)}_{2}\right)^kq^{(\alpha,v)}_{2}\cdots q^{(\alpha,v)}_{s-1}q_{(v,w)}\\
 \left(C^{(\alpha,v)}_{1}\right)^kq^{(\alpha,v)}_{1}\cdots q^{(\alpha,v)}_{s-1}q_{(v,w)}
 \end{array}\]
 where $0\le k<\mu(\alpha)$. As we can see this list contains $\mu(\alpha)\occ(\alpha,V)$ elements. Now, if we do the same for the rest of the $w$'s that are in the $s$-th interval we will obtain $\mu(\alpha)\occ(\alpha,V)\occ^{(\alpha,v)}_s(w)$ paths in $\Q$ associated to $\alpha$ that start at $v$ and finish at $w$, where $w$ is a vertex in the $s$-th interval. Finally, doing this very same reasoning for the rest of the intervals in the special and non-special diagram associated to $\alpha$ we can conclude that the total number of paths in $\Q$ associated to the vertex $\alpha$ that start at $v$ and finish at $w$ is equal to
 \begin{eqnarray*}
 \sum_{i=1}^{s}\mu(\alpha)\occ(\alpha,V)\occ^{(\alpha,v)}_i(w) & = & \mu(\alpha)\occ(\alpha,V)\sum_{i=1}^{s}\occ^{(\alpha,v)}_i(w)\\
  & = & \mu(\alpha)\occ(\alpha,V)\occ(\alpha,W).
 \end{eqnarray*}
 Hence, we have that \[\textrm{dim}_Kv\L w=\sum_{\alpha\in\overline{V}\cap\overline{W}}\mu(\alpha)\occ(\alpha,V)\occ(\alpha,W).\]
\end{proof}
\section{The Cartan matrix of an algebra}

Let $\L$ be a finite-dimensional $K$-algebra (where $\L$ is not necessarily a Brauer configuration algebra) and $P_1,\ldots, P_n$ be the pairwise non-isomorphic list of the projective (right) $\L$-modules. As usual, we denote by $\top(P_i)$ the quotient $P_i/\rad\,P_i$, for every $1\le i\le n$. It is very known that all the $\L$-modules $\top(P_i)$ are simple, and every simple $\L$-module is isomorphic to one of these. For each $1\le i\le n$ we denote by $S_i$ any simple $\L$-module isomorphic to $\top(P_i)$.

\begin{defin}\label{010}
Let $\CM_{\L}=\left(c_{i,j}\right)$ be the $n\times n$ matrix with entries in $\mathbb{Z}_{\ge0}$, where $c_{i,j}$ is the multiplicity of the simple module $\top(P_i)$ in a composition series of $P_j$. We call this matrix the \textit{Cartan matrix} of the algebra $\L$. The integers $c_{i,j}$ are called \textit{Cartan numbers} of the algebra $\L$.

\end{defin}
There is a different particular way to define the Cartan matrix of an algebra. For this we need the following lemma.

\begin{lem}\label{012}
Let $M$ be a $\L$-module of finite length and for every $1\le i\le n$ let $\Delta_i$ denote the division ring given by $\hom_{\L}(S_i,S_i)=\enD_{\L}(S_i)$. Then the value dim$_{\Delta_i}\hom_{\L}(P_i,M)$ coincides with the multiplicity of $S_i$ in a composition factor of $M$.
\end{lem}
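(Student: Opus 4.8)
The plan is to reduce the statement to a statement about the top and radical filtration, using the standard fact that $\hom_\L(P_i,-)$ is exact (projectivity) and that for a simple module $S_j$ one has $\hom_\L(P_i,S_j)\cong\enD_\L(S_j)$ if $j=i$ and $0$ otherwise. First I would fix a composition series $0=M_0\subset M_1\subset\cdots\subset M_m=M$ with simple quotients $M_k/M_{k-1}\cong S_{j_k}$; such a series exists because $M$ has finite length. The multiplicity of $S_i$ in this series is, by definition, $\#\{k : j_k=i\}$, and by the Jordan–Hölder theorem this number is independent of the chosen series.

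Next I would apply the functor $\hom_\L(P_i,-)$ to each short exact sequence $0\to M_{k-1}\to M_k\to S_{j_k}\to 0$. Since $P_i$ is projective, $\hom_\L(P_i,-)$ is exact, so we get short exact sequences of right $\Delta_i$-modules (note $\hom_\L(P_i,N)$ is naturally a right module over $\enD_\L(P_i)$, but here the relevant action is the left $\Delta_i=\enD_\L(S_i)$-action coming from... — more cleanly: $\hom_\L(P_i,N)$ is a $\Delta_i$-vector space because $\Delta_i$ acts on it through the $S_i$-isotypical data; the cleanest route is to observe $\hom_\L(P_i,N)\cong \hom_\L(P_i,-)$ commutes with the relevant structure). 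Taking $\Delta_i$-dimensions, which is additive on short exact sequences of $\Delta_i$-modules, telescoping over $k=1,\dots,m$ gives
\[
\dim_{\Delta_i}\hom_\L(P_i,M)=\sum_{k=1}^{m}\dim_{\Delta_i}\hom_\L(P_i,S_{j_k}).
\]
It then remains to compute each summand: I claim $\hom_\L(P_i,S_j)=0$ for $j\neq i$ and $\hom_\L(P_i,S_i)\cong\enD_\L(S_i)=\Delta_i$ as a $\Delta_i$-module, so that $\dim_{\Delta_i}\hom_\L(P_i,S_j)=\delta_{ij}$. Granting this, the right-hand side above collapses to $\#\{k:j_k=i\}$, which is exactly the multiplicity of $S_i$, proving the lemma.

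For the claim $\hom_\L(P_i,S_j)\cong\delta_{ij}\Delta_i$: any nonzero map $P_i\to S_j$ is surjective (as $S_j$ is simple), hence factors through $\top(P_i)=P_i/\rad P_i\cong S_i$; so $\hom_\L(P_i,S_j)\cong\hom_\L(S_i,S_j)$, which is $0$ unless $S_i\cong S_j$, i.e. unless $i=j$, by Schur's lemma, and equals $\enD_\L(S_i)=\Delta_i$ when $i=j$. The $\Delta_i$-module structure on $\hom_\L(P_i,S_i)\cong\Delta_i$ is the regular one, so its $\Delta_i$-dimension is $1$. I expect the main obstacle to be purely bookkeeping: namely being careful that $\hom_\L(P_i,M)$ really is a (say, left) $\Delta_i$-vector space in a way compatible with the short exact sequences — this works because $\Delta_i$ acts on $\hom_\L(P_i,S_i)$ and one propagates the action along the composition series via the functoriality of $\hom_\L(P_i,-)$, or, most simply, because over the telescoping sum each piece is visibly a finite-dimensional $\Delta_i$-space and additivity of length/dimension does the rest. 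No step requires anything beyond exactness of $\hom$ from a projective, Schur's lemma, and Jordan–Hölder.
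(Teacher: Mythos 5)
Your argument is correct and is the standard one; note, though, that the paper does not actually prove Lemma \ref{012} at all --- its ``proof'' is just a pointer to Benson's book \cite[Page 14]{ben}, so what you have written is essentially a reconstruction of the argument that the cited reference contains: fix a composition series, use exactness of $\hom_{\L}(P_i,-)$ (projectivity) to telescope, and compute $\hom_{\L}(P_i,S_j)$ via $\top(P_i)\cong S_i$ and Schur's lemma, so that each composition factor contributes $\delta_{ij}$. The only place where your write-up is genuinely muddled is the point you yourself flag: the $\Delta_i$-structure on $\hom_{\L}(P_i,M)$. In general $\hom_{\L}(P_i,M)$ is naturally a module over $\enD_{\L}(P_i)$ (by precomposition), not over $\Delta_i=\enD_{\L}(S_i)$; the radical of $\enD_{\L}(P_i)$ need not act by zero (take $M=P_i$), so ``propagating the $\Delta_i$-action along the composition series'' is not literally available. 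The clean repair is either to phrase the count over $K$: from your telescoping identity one gets $\dim_K\hom_{\L}(P_i,M)=m_i\cdot\dim_K\Delta_i$, where $m_i$ is the multiplicity of $S_i$, and then interpret $\dim_{\Delta_i}$ as this quotient; or to invoke that $\enD_{\L}(P_i)$ is local with residue division ring $\Delta_i$ and be explicit about which module structure is used. In the setting the paper actually needs (Observation \ref{014}, $K$ algebraically closed), $\Delta_i=K$ and the issue disappears entirely, so your proof, with that one sentence tightened, does the job and supplies a proof the paper leaves to the literature.
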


\begin{proof}
In \cite[Page 14]{ben} you can find a very nice proof of this lemma.
\end{proof}
Using Lemma \ref{012} we have that if $\CM_{\L}=\left(c_{i,j}\right)$ is the Cartan matrix of $\L$ then
\begin{equation}\label{013}
c_{i,j}=\textrm{dim}_{\Delta_i}\hom_{\L}(P_i,P_j).
\end{equation}

\begin{obs}\label{014}
When the field $K$ is algebraically closed every division ring $\Delta_i=\enD_{\L}(S_i)$ must be equal to $K$. This is due to the fact that there are no finite dimensional division rings over $K$, except the only case of $K$ itself.
\end{obs}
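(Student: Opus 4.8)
The plan is to combine Schur's lemma with the classical fact that an algebraically closed field admits no proper finite-dimensional division algebra. First I would recall why $\Delta_i=\enD_{\L}(S_i)$ is a division ring: given any nonzero $\varphi\in\enD_{\L}(S_i)$, both $\ker\varphi$ and $\varphi(S_i)$ are $\L$-submodules of the simple module $S_i$, so $\ker\varphi=\{0\}$ and $\varphi(S_i)=S_i$. Hence $\varphi$ is an isomorphism and therefore invertible in $\enD_{\L}(S_i)$, which is exactly the assertion that $\Delta_i$ is a division ring.

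Next I would record the two structural facts that promote $\Delta_i$ to a finite-dimensional division algebra over $K$. Since $\L$ is a finite-dimensional $K$-algebra, each simple module $S_i$ is finite-dimensional over $K$, so $\enD_{\L}(S_i)$ sits as a $K$-subspace inside $\enD_K(S_i)$ and is in particular finite-dimensional over $K$. Moreover, because $K$ maps into the center of $\L$, multiplication by a scalar is an $\L$-endomorphism of $S_i$ that commutes with every element of $\Delta_i$; thus $K$ embeds as a central subfield of $\Delta_i$.

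The heart of the argument is then an elementary observation about $D:=\Delta_i$. Pick an arbitrary $d\in D$: the subalgebra $K[d]\subseteq D$ is commutative and finite-dimensional, so $d$ is algebraic over $K$, and its minimal polynomial $m(x)\in K[x]$ is irreducible because $K[d]$, living inside a division ring, is an integral domain. As $K$ is algebraically closed its only irreducible polynomials are linear, forcing $m(x)=x-\lambda$ and hence $d=\lambda\in K$; since $d$ was arbitrary, $D=K$ and therefore $\Delta_i=K$. I do not expect a genuine obstacle here, as the statement merely packages Schur's lemma together with a standard fact about algebraically closed fields; the only points meriting care are the verifications in the second paragraph that $\Delta_i$ is finite-dimensional over $K$ and that $K$ lands in its center, both of which follow at once from $\L$ being a finite-dimensional $K$-algebra.
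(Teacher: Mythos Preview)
Your argument is correct and follows exactly the route the paper indicates: the paper treats this as an observation and simply invokes the fact that an algebraically closed field admits no proper finite-dimensional division algebra, while you have carefully unpacked that fact together with the preliminary verifications (Schur's lemma, finite-dimensionality, centrality of $K$). There is nothing to add.
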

The following lemma is a very known result (see for example \cite[Lemma 4.2]{rep1}).

\begin{lem}\label{011}
Let $M$ be a $\L$-module and $e\in\L$ be an idempotent. Then \[Me\cong\hom_{\L}(e\L,M)\] as right $e\L e$-modules. In particular, is also an isomorphism of vector $K$-spaces.
\end{lem}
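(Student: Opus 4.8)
The plan is to exhibit a concrete pair of mutually inverse maps and then verify that one of them respects the relevant module structures; linearity over $K$ will follow automatically. The natural candidate is \emph{evaluation at} $e$. I would define $\Phi\colon\hom_{\L}(e\L,M)\to Me$ by $\Phi(f)=f(e)$. First I would check that $\Phi$ indeed lands in $Me$: since $e=e^{2}$ we may write $e=e\cdot e$ inside the right $\L$-module $e\L$, so $f(e)=f(e\cdot e)=f(e)e\in Me$, using that $f$ is a homomorphism of right $\L$-modules. In the other direction I would define $\Psi\colon Me\to\hom_{\L}(e\L,M)$ by sending $m\in Me$ to the map $\Psi(m)\colon x\mapsto mx$ for $x\in e\L$; this $\Psi(m)$ is a right $\L$-module homomorphism because $\Psi(m)(x\lambda)=m(x\lambda)=(mx)\lambda=\Psi(m)(x)\lambda$ for every $\lambda\in\L$.

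Next I would verify that $\Phi$ and $\Psi$ are mutually inverse. The key elementary identity is that every $x\in e\L$ satisfies $ex=x$ (since $x=ey$ for some $y\in\L$, whence $ex=e^{2}y=ey=x$). For $\Phi\circ\Psi$ one computes $\Phi(\Psi(m))=\Psi(m)(e)=me=m$, the last equality holding because $m\in Me$ forces $me=m$. For $\Psi\circ\Phi$, given $f$ and $x\in e\L$, one has $\Psi(\Phi(f))(x)=\Phi(f)\,x=f(e)x=f(ex)=f(x)$, where the third equality uses the right $\L$-linearity of $f$ applied to the scalar $x\in\L$ and the fourth uses $ex=x$. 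Hence $\Phi$ is a bijection with inverse $\Psi$.

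Finally I would check compatibility with the $e\L e$-actions, which is the part deserving the most care. On $Me$ the right $e\L e$-structure is ordinary right multiplication, while on $\hom_{\L}(e\L,M)$ it arises from viewing $e\L$ as an $(e\L e,\L)$-bimodule, the left $e\L e$-action inducing the right action $(f\cdot a)(x)=f(ax)$ for $a\in e\L e$. The identities $ea=ae=a$ for $a\in e\L e$ are exactly what make this work: one obtains $\Phi(f\cdot a)=(f\cdot a)(e)=f(ae)=f(a)=f(e\cdot a)=f(e)a=\Phi(f)\,a$, so $\Phi$ is $e\L e$-linear. I expect the main obstacle to be not any deep computation but keeping the side conventions straight—$e\L$ is a right $\L$-module yet a \emph{left} $e\L e$-module, so the induced action on $\hom_{\L}(e\L,M)$ is a right action—and confirming at each step that the idempotent identities $ex=x$ on $e\L$ and $ea=ae=a$ on $e\L e$ are applied on the correct side. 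Since $\Phi$ is additive and $K$ acts centrally, it is in particular an isomorphism of $K$-vector spaces, yielding the stated isomorphism $Me\cong\hom_{\L}(e\L,M)$.
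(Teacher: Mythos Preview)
Your argument is correct and is precisely the standard evaluation-at-$e$ proof of this well-known isomorphism; the steps where you use $e^{2}=e$ to see that $\Phi(f)\in Me$, that $ex=x$ for $x\in e\L$, and that $ea=ae=a$ for $a\in e\L e$ are all applied on the right sides, and the verification of $e\L e$-linearity via $(f\cdot a)(x)=f(ax)$ is handled cleanly.

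There is, however, nothing to compare against: the paper does not supply its own proof of this lemma. It simply records the statement as ``a very known result'' and refers the reader to \cite[Lemma 4.2]{rep1}. So your write-up is not an alternative to the paper's argument but rather a self-contained replacement for the external citation; in that sense it adds detail the paper deliberately omits.
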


So, if we have that $f\in\L$ is also an idempotent then by Lemma \ref{011} it follows that \[\hom_{\L}(e\L,f\L)\cong f\L e.\] Now, let $\left\{e_1,\ldots,e_n\right\}$ be a complete set of primitive orthogonal idempotents of $\L$. We can assume then that $P_i=e_i\L$, for every $1\le i\le n$. We also are going to assume from now on that the field $K$ is algebraically closed, then the Cartan numbers in (\ref{013}) are given now by
\begin{eqnarray}
c_{i,j} & = & \textrm{dim}_K\hom_{\L}(P_i,P_j)\nonumber\\
 & = & \textrm{dim}_K\hom_{\L}(e_i\L,e_j\L)\nonumber\\
 & = & \textrm{dim}_Ke_j\L e_i.\label{019}
\end{eqnarray}
According to \cite{rep1} we remind that if $M$ is a finitely generated (right) $\L$-module its \textit{dimension vector} is defined as the vector given by \[\textrm{\bf dim}\,M=\left(\textrm{dim}_KMe_1,\ldots,\textrm{dim}_KMe_n\right)^t\]
Then, by using the equality in (\ref{019}) we have that the Cartan matrix of $\L$ can be expressed as\[\CM_{\L}=\left(\,\textrm{\bf dim}\,P_1\,\cdots\,\textrm{\bf dim}\,P_n\,\right).\] Now, if $\L$ is a \textit{symmetric} algebra we have that the contravariant functors Hom$_K(-,K)$ and Hom$_{\L}(-,\L)$ are isomorphic functors (see \cite[Proposition IV.3.8]{aus}). Defining $I_i=\textrm{Hom}_K(\L e_i,K)$, for each $1\le i\le n$, we know that every indecomposable injective (right) $\L$-module is isomorphic to one of these $\L$-modules, then if $\L$ is symmetric we obtain respectively that
\begin{eqnarray*}
I_i & = & \textrm{Hom}_K(\L e_i,K)\\
 & \cong & \textrm{Hom}_{\L}(\L e_i,\L)\\
 & \cong & e_i\L\\
 & = & P_i.
\end{eqnarray*}
We have the following proposition.
\begin{propo}\label{020}
Let $\L$ be a symmetric finite dimensional algebra. Then the Cartan matrix of $\L$ is symmetric.
\end{propo}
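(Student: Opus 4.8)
The plan is to derive the symmetry of the Cartan matrix directly from the self-injectivity of $\L$ that was just established, using the formula $c_{i,j}=\textrm{dim}_K e_j\L e_i$ from (\ref{019}). Since $\L$ is symmetric, we showed above that $I_i\cong P_i$ for every $i$, i.e., the indecomposable projective modules coincide with the indecomposable injective modules. So the first step is to recall that $c_{i,j}$ counts the multiplicity of $S_i$ in $P_j$ (by Definition \ref{010} and Lemma \ref{012}), and observe that by the duality it also counts the multiplicity of $S_i$ in the socle factors of the injective envelope $I_j$; but the real leverage comes from the bilinear form associated to a symmetric algebra.

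Concretely, the cleanest route is: a symmetric algebra carries a non-degenerate associative symmetric bilinear form $\langle-,-\rangle:\L\times\L\to K$, coming from the symmetrizing linear form $\lambda:\L\to K$ with $\langle a,b\rangle=\lambda(ab)$. Associativity means $\langle ab,c\rangle=\langle a,bc\rangle$ and symmetry means $\langle a,b\rangle=\langle b,a\rangle$. I would show that this form restricts to a non-degenerate pairing $e_i\L e_j\times e_j\L e_i\to K$: indeed for $a\in e_i\L e_j$ and $b\in e_j\L e_i$ we get $\langle a,b\rangle=\lambda(ab)=\lambda(e_i ab e_i)$, and if $a$ pairs to zero with all such $b$ then using non-degeneracy of $\langle-,-\rangle$ on all of $\L$ together with $a=e_i a e_j$ one forces $a=0$ (and symmetrically in $b$). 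Hence $e_i\L e_j$ and $e_j\L e_i$ are $K$-dual, so $\textrm{dim}_K e_i\L e_j=\textrm{dim}_K e_j\L e_i$. Combining with (\ref{019}), this reads $c_{j,i}=\textrm{dim}_K e_i\L e_j=\textrm{dim}_K e_j\L e_i=c_{i,j}$, which is exactly the symmetry of $\CM_{\L}$.

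Alternatively — and perhaps more in keeping with the functorial language already used in the excerpt — one can avoid the bilinear form entirely: from $\textrm{Hom}_K(-,K)\cong\textrm{Hom}_\L(-,\L)$ one gets, for the left module $\L e_i$, that $\textrm{Hom}_K(\L e_i,K)\cong e_i\L$ as right modules; applying $\textrm{Hom}_K(-,K)$ to the vector space $e_j\L e_i$ and tracking the bimodule actions yields $\textrm{Hom}_K(e_j\L e_i,K)\cong e_i\L e_j$, so again the two spaces have equal dimension and $c_{i,j}=c_{j,i}$.

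The main obstacle is bookkeeping the idempotent placements and the left/right module structures carefully enough that the duality really lands $e_i\L e_j$ opposite $e_j\L e_i$ rather than, say, $e_i\L e_j$ opposite itself; once the associativity identity $\lambda(ab)=\lambda(ba)$ (equivalently $\langle a,b\rangle=\langle b,a\rangle$) is invoked, the swap of indices is automatic, but one must be sure the restricted pairing stays non-degenerate, which is where the primitivity of the $e_i$ and the non-degeneracy of $\lambda$ on the whole algebra are used. No genuinely hard step is expected; this is a standard consequence of symmetry, so the proof will be short.
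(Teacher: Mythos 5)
Your argument is correct, but it takes a genuinely different route from the paper. The paper argues at the level of dimension vectors: it cites the formulas $\textrm{\bf dim}\,P_i=\CM_{\L}\cdot\textrm{\bf dim}\,S_i$ and $\textrm{\bf dim}\,I_i=\CM_{\L}^t\cdot\textrm{\bf dim}\,S_i$, uses the isomorphism $P_i\cong I_i$ established just before the proposition (via $\textrm{Hom}_K(-,K)\cong\textrm{Hom}_{\L}(-,\L)$) to get $\CM_{\L}\cdot\textrm{\bf dim}\,S_i=\CM_{\L}^t\cdot\textrm{\bf dim}\,S_i$ for every $i$, and concludes $\CM_{\L}=\CM_{\L}^t$ because the vectors $\textrm{\bf dim}\,S_i$ are the standard basis vectors (here the standing assumption that $K$ is algebraically closed is used). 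You instead attack the entries $c_{i,j}=\textrm{dim}_Ke_j\L e_i$ from (\ref{019}) directly through the symmetrizing form $\lambda$, and your non-degeneracy claim does hold: if $a=e_iae_j$ pairs to zero against every $e_jce_i$, then $\lambda(ac)=\lambda(a\,e_jce_i)=0$ for all $c\in\L$ by the trace property, so $\lambda(a\L)=0$ and $a=0$; hence $e_i\L e_j$ and $e_j\L e_i$ are dual and $c_{i,j}=c_{j,i}$. One small correction: primitivity of the idempotents plays no role in that non-degeneracy (it holds for arbitrary idempotents); primitivity only enters in identifying $e_i\L$ with $P_i$, which is part of the setup. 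Your second, functorial variant---dualizing the corner space to get $\textrm{Hom}_K(e_j\L e_i,K)\cong e_i\L e_j$ from the bimodule isomorphism $\textrm{Hom}_K(\L,K)\cong\L$---is closer in spirit to the paper's preliminary paragraph, but the paper then passes to dimension vectors rather than dualizing corner spaces. What your approach buys is a short, self-contained, entry-by-entry argument that needs neither the cited dimension-vector formulas nor the observation that the $\textrm{\bf dim}\,S_i$ span; what the paper's approach buys is that it recycles exactly the material already displayed ($P_i\cong I_i$ and dimension vectors) without ever introducing the bilinear form, which is never defined in the text. Both arguments rely on the standing assumption that $K$ is algebraically closed, since both ultimately use $c_{i,j}=\textrm{dim}_K\textrm{Hom}_{\L}(P_i,P_j)$.
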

\begin{proof}
Let $\CM_{\L}$ denote the Cartan matrix of $\L$. By \cite[Proposition III.3.8]{rep1} we have the equalities
\begin{eqnarray*}
\textrm{\bf dim}\,P_i & = & \CM_{\L}\cdot\textrm{\bf dim}\,S_i,\\\textrm{\bf dim}\,I_i & = & \CM_{\L}^t\cdot\textrm{\bf dim}\,S_i,
\end{eqnarray*}
then by the isomorphism above it follows that $\textrm{\bf dim}\,P_i=\textrm{\bf dim}\,I_i$ and hence \[\CM_{\L}\cdot\textrm{\bf dim}\,S_i=\CM_{\L}^t\cdot\textrm{\bf dim}\,S_i.\] This equality is satisfied for all $1\le i\le n$, then we can conclude that \[\CM_{\L}=\CM_{\L}^t.\]
\end{proof}

\section{The Cartan matrix of a Brauer configuration algebra}

From all our previous work we are now ready to give right away the explicit expression of the Cartan matrix of a Brauer configuration algebra. Let $\L=K\Q/I$ be the Brauer configuration algebra associated to $\G=(\G_0,\G_1,\mu,\oo)$, where $K$ is an algebraically closed field, and let $\CM_{\L}=\left(c_{v,w}\right)_{V,W\in\G_1}$ be the Cartan matrix of $\L$. Then by \cite[Proposition 3.3]{mya} and Proposition \ref{018} we have that
\begin{equation}\label{021}
c_{v,w}=\left\{\begin{array}{cr}2+\sum\limits_{\alpha\in\overline{V}}\occ(\alpha,V)\left(\occ(\alpha,V)\mu(\alpha)-1\right), & V=W;\\\sum\limits_{\alpha\in\overline{V}\cap\overline{W}}\mu(\alpha)\occ(\alpha,V)\occ(\alpha,W), & V\neq W.\end{array}\right.
\end{equation}
By \cite[Proposition 3.2]{brau} we have that $\L$ is a symmetric algebra, then by Proposition \ref{020} the matrix $\CM_{\L}$ is symmetric, but this is clearly seen without using Proposition \ref{020}. Just watch the expression for the dimension in Proposition \ref{018}.

\begin{ejem}\label{022}
We are going to consider a particular class of Brauer configuration algebras that has a very interesting class of Cartan matrices. Let $\G=(\G_0,\G_1,\mu,\oo)$ be the Brauer configuration such that $\mu\equiv1$ and the polygons in $\G_1=\left\{V_1,\ldots,V_n\right\}$ are sets, with $n\ge2$. Because of this, in particular, we can affirm that for any $V\in\G_1$ \[\occ(\alpha,V)\le1,\textrm{ for all }\alpha\in\G_0.\] Let $\L$ be the Brauer configuration algebra induced by $\G$ and let $\CM_{\L}$ be its Cartan matrix, then by the expression in (\ref{021}) we obtain that
\[\CM_{\L}=\left(\begin{array}{cccc}2 & |V_1\cap V_2| & \cdots & |V_1\cap V_n|\\|V_1\cap V_2| & 2 &  & |V_2\cap V_n|\\ \vdots & \vdots & \ddots & \vdots\\|V_1\cap V_n| & |V_2\cap V_n| & \cdots & 2\end{array}\right)\]
\end{ejem}

\begin{ejem}\label{023}
Let $\G$ be the Brauer configuration from Example \ref{002}. It is not difficult to see that 
\begin{multicols}{3}
\begin{enumerate}[]
\item $\overline{V}_1\cap\overline{V}_2=\{1,2\}$,
\item $\overline{V}_1\cap\overline{V}_3=\{1\}$,
\item $\overline{V}_1\cap\overline{V}_4=\emptyset$,
\item $\overline{V}_2\cap\overline{V}_3=\{1\}$,
\item $\overline{V}_2\cap\overline{V}_4=\emptyset$,
\item $\overline{V}_3\cap\overline{V}_4=\{3\}$.
\end{enumerate}
\end{multicols}
Let $\L$ be the Brauer configuration algebra associated to $\G$ and let $\CM_{\L}=\left(c_{i,j}\right)_{1\le i,j\le 4}$ be the respective Cartan matrix, then the entries of the main diagonal are equal to
\[
\begin{array}{rclcl}
c_{1,1} & = & 2+1\times(1\times2-1)+1\times(1\times2-1) & = &4\\
 c_{2,2} & = & 2+1\times(1\times2-1)+1\times(1\times2-1) & = &4\\
 c_{3,3} & = & 2+2\times(2\times2-1)+2\times(1\times2-1) & = & 10\\
  c_{4,4} & = & 2 & & 
\end{array}
\]
and the other entries are given by
\[
\begin{array}{rclcl}
c_{1,2} & = & 2\times1\times1+2\times1\times1 & = & 4\\
c_{1,3} & = & 2\times1\times2 & = & 4\\
c_{1,4} & = & 0 & &\\
c_{2,3} & = & 2\times1\times2 & = & 4\\
c_{2,4} & = & 0 & &\\
c_{3,4} & = & 1\times2\times1 & = & 2
\end{array}
\]
then we have that the Cartan matrix is equal to
\[\CM_{\L}=\left(\begin{array}{cccc}4 & 4 & 4 & 0\\4 & 4 & 4 & 0\\4 & 4 & 10 & 2\\0 & 0 & 2 & 2\end{array}\right)\]
It is a known result that the summation of all the entries of the Cartan matrix of an algebra coincides with the vector dimension of the algebra. So, let's check this result in this example just to be a little more confident about the veracity of the principal result of the present work. By \cite[Proposition 3.13]{brau} we can calculate the vector dimension of $\L$ by using the combinatorial information\footnote{The expression to compute this value is equal to $2|\G_1|+\sum\limits_{\alpha\in\G_0}\val(\alpha)(\mu(\alpha)\val(\alpha)-1)$.} in $\G$. Then this value for $\L$ is 
{\small
\begin{eqnarray*}
\textrm{dim}_k\L & = & 2\times4+4\times(2\times4-1)+2\times(2\times2-1)+3\times(1\times3-1)\\
 & = & 8+28+6+6\\
 & = & 48
\end{eqnarray*}
}
By the other side, it's easily seen that the summation of all the entries of $\CM_{\L}$ is equal to
\begin{eqnarray*}
8\times4+3\times2+10 & = & 32+6+10\\
 & = & 48
\end{eqnarray*}
\end{ejem}

\end{document}